\journal{a journal}
\numberwithin{equation}{section}
\newtheorem{proposition}{Proposition}[section]
\newtheorem{theorem}{Theorem}[section]
\newtheorem{remark}{Remark}[section]
\newtheorem{definition}{Definition}[section]
\begin{document}
\begin{frontmatter}
\title{On De la Pe\~{n}a Type  Inequalities for Point Processes}


\author{Naiqi Liu
}
\ead{18678896336@163.com}

\address{School of Mathematics, Shandong University, Jinan, China}

\author{Vladimir V.   Ulyanov
}
\ead{vulyanov@cs.msu.ru}

\address{Lomonosov Moscow State University, Moscow, Russia; \\
Institute for Financial Studies, Shandong University, Jinan, China}

\author{Hanchao Wang \corref{cor1}
}
\ead{wanghanchao@sdu.edu.cn}

\cortext[cor1]{Corresponding author}
\address{Institute for Financial Studies, Shandong University, Jinan, China}

\begin{abstract}
There has been a renewed interest in exponential  concentration inequalities for stochastic processes in probability and statistics over the last three decades. De la Pe\~{n}a \cite{d}  establishes a nice exponential  inequality for discrete time  locally square integrable martingale . In this paper,  we obtain de la Pe\~{n}a's  inequalities for stochastic integral of multivariate point processes. The proof is primarily based on Dol\'{e}ans-Dade exponential formula and the optional stopping theorem.  As application, we obtain an exponential  inequality for block counting process in $\Lambda-$coalescents. 
\end{abstract}

\begin{keyword}
De la Pe\~{n}a's  inequalities, purely discontinuous local martingales, stochastic integral of multivariate point processes,  Dol\'{e}ans-Dade exponential.

\MSC[2020]  60E15, 60G55, 60G57

\end{keyword}

\end{frontmatter}

\section{Introduction}\label{sec-Intro}
\setcounter{equation}{0}
\renewcommand{\theequation}{\thesection.\arabic{equation}}

Let  $S=(S_n)_{n\ge 0}$ be a locally square integrable  martingale on $(\Omega, \mathcal{F}, (\mathcal{F}_n)_{n\ge 1}, \mathbb{P})$.  The predictable quadratic variation of  $S=(S_n)_{n\ge 0}$ is given by 
                    $$<S,S>_n=\sum_{i=1}^{n}\mathbb{E}[((S_i-S_{i-1})^2|\mathcal{F}_{i-1}].$$
Many authors studied the upper bound of 
                              $$\mathbb{P}(S_n\ge x, <S,S>_n\le y).$$
The celebrated Freedman inequality is as follows. 

 \begin{theorem} (Freedman \cite{f}) Let  $S=(S_n)_{n\ge 0}$ be a locally square integrable martingale on $(\Omega, \mathcal{F}, (\mathcal{F}_n)_{n\ge 1}, \mathbb{P}))$. If $|S_k-S_{k-1}|\le c$ for each $1\le k\le n$,  then
            $$\mathbb{P}\big(S_n\ge x,<S,S>_n\le y\big)\le \exp\{-\frac{x^2}{2(y+cx)}\}.$$
\end{theorem}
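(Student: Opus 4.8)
The plan is to prove the bound by the exponential-supermartingale (Bernstein--Chernoff) method, which is the discrete-time prototype of the argument the paper will later carry out for point processes. Write $d_k=S_k-S_{k-1}$ for the martingale increments, so that $\mathbb{E}[d_k\mid\mathcal{F}_{k-1}]=0$, $|d_k|\le c$, and, since the predictable quadratic variation is $\mathcal{F}_{k-1}$-measurable, $\langle S,S\rangle_k-\langle S,S\rangle_{k-1}=\mathbb{E}[d_k^2\mid\mathcal{F}_{k-1}]$. For a parameter $\lambda>0$ I set $\phi(\lambda)=(e^{\lambda c}-1-\lambda c)/c^2$ and consider
$$M_n=\exp\{\lambda S_n-\phi(\lambda)\langle S,S\rangle_n\},\qquad M_0=1.$$

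The first step is to show that $(M_n)_{n\ge0}$ is a supermartingale. The key elementary inequality is $e^{\lambda t}\le 1+\lambda t+\phi(\lambda)t^2$ for every $t\le c$ (the constant $\phi(\lambda)$ is chosen so that equality holds at $t=c$, and the bound persists for all $t\le c$ because $t\mapsto(e^{\lambda t}-1-\lambda t)/t^2$ is nondecreasing). Applying it at $t=d_k$, taking $\mathbb{E}[\,\cdot\mid\mathcal{F}_{k-1}]$, and using $\mathbb{E}[d_k\mid\mathcal{F}_{k-1}]=0$ gives
$$\mathbb{E}[e^{\lambda d_k}\mid\mathcal{F}_{k-1}]\le 1+\phi(\lambda)\,\mathbb{E}[d_k^2\mid\mathcal{F}_{k-1}]\le \exp\{\phi(\lambda)\,\mathbb{E}[d_k^2\mid\mathcal{F}_{k-1}]\}.$$
Because $M_k=M_{k-1}\exp\{\lambda d_k-\phi(\lambda)(\langle S,S\rangle_k-\langle S,S\rangle_{k-1})\}$ with the compensator increment $\mathcal{F}_{k-1}$-measurable, this yields $\mathbb{E}[M_k\mid\mathcal{F}_{k-1}]\le M_{k-1}$, and hence $\mathbb{E}[M_n]\le1$.

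The second step is a Chernoff argument exploiting the fact that $\langle S,S\rangle_n$ sits inside $M_n$. On the event $\{S_n\ge x,\ \langle S,S\rangle_n\le y\}$ one has $M_n\ge\exp\{\lambda x-\phi(\lambda)y\}$, so by Markov's inequality and $\mathbb{E}[M_n]\le1$,
$$\mathbb{P}\big(S_n\ge x,\ \langle S,S\rangle_n\le y\big)\le \exp\{-\lambda x+\phi(\lambda)y\}.$$
It remains to optimize in $\lambda>0$. Choosing the exact minimizer $\lambda=c^{-1}\log(1+cx/y)$ produces the Bennett-type exponent $(y/c^2)\,h(cx/y)$, where $h(u)=(1+u)\log(1+u)-u$.

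The main obstacle---really the only nontrivial analytic point---is to pass from this sharp Bennett exponent to the looser but cleaner Freedman exponent in the statement. This reduces to the scalar inequality $h(u)\ge u^2/\big(2(1+u)\big)$ for $u\ge0$, which I would establish by a direct comparison: both $h$ and $u^2/(2(1+u))$, together with their first derivatives, vanish at $u=0$, and a second-derivative computation gives $f''(u)=u(u+2)/(1+u)^3\ge0$ for the difference $f$, fixing the sign. Substituting $u=cx/y$ then turns $(y/c^2)h(cx/y)$ into exactly $x^2/\big(2(y+cx)\big)$. One further technical point deserves care: since $S$ is only \emph{locally} square integrable, I would first stop along a localizing sequence to make $(M_n)$ a genuine (rather than local) supermartingale and then pass to the limit, an application of the optional stopping theorem consistent with the methods used later in the paper.
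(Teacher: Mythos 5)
The paper does not prove this statement; it is quoted verbatim from Freedman \cite{f} as background, so there is no in-paper argument to compare against. Judged on its own terms, your proof is correct and is essentially the classical exponential-supermartingale route: the choice $\phi(\lambda)=(e^{\lambda c}-1-\lambda c)/c^2$ together with the monotonicity of $t\mapsto(e^{\lambda t}-1-\lambda t)/t^2$ gives the conditional bound $\mathbb{E}[e^{\lambda d_k}\mid\mathcal{F}_{k-1}]\le\exp\{\phi(\lambda)\mathbb{E}[d_k^2\mid\mathcal{F}_{k-1}]\}$, the predictability of the compensator increments makes $M_n$ a supermartingale, and the Chernoff step with the optimizer $\lambda=c^{-1}\log(1+cx/y)$ yields the Bennett exponent $(y/c^2)h(cx/y)$. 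Your reduction $h(u)\ge u^2/(2(1+u))$ via $f(0)=f'(0)=0$ and $f''(u)=u(u+2)/(1+u)^3\ge0$ is correct and turns the Bennett bound into exactly $\exp\{-x^2/(2(y+cx))\}$. The localization remark is harmless but essentially unnecessary here, since $|d_k|\le c$ already makes all the exponential moments finite. Your method is also the natural discrete-time prototype of what the paper actually does for point processes, where the Dol\'eans-Dade exponential and optional stopping play the role of your supermartingale $M_n$ and the bound $\mathbb{E}[M_n]\le1$.
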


This  result can be regarded as an  extension of Hoeffding \cite{h}.  Fan, Grama and Liu \cite{fgl1, fgl2}, Rio \cite{r} obtained a series of remarkable extensions of Freedman inequality  \cite{f}. See also  Bercu et al. \cite{bdr} for a recent review in this field. 

 De la Pe\~{n}a \cite{d}  establishes a nice exponential  inequality for  discrete time  locally square integrable martingales.

 \begin{theorem} \label{d1}(De la Pe\~{n}a \cite{d}) Let  $S=(S_n)_{n\ge 0}$ be a locally square integrable and conditionally symmetric martingale on $(\Omega, \mathcal{F}, (\mathcal{F}_n)_{n\ge 1}, \mathbb{P})$.  Then
            $$\mathbb{P}\big(S_n\ge x, \sum_{i=1}^{n}(S_i-S_{i-1})^2\le y\big)\le \exp\{-\frac{x^2}{2y}\}.$$
\end{theorem}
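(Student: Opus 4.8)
The plan is to prove the bound by the exponential-supermartingale (Chernoff) method, in which conditional symmetry is exactly the ingredient needed to make the naive candidate supermartingale work \emph{without} any boundedness assumption on the increments. Write $d_i = S_i - S_{i-1}$ and assume without loss of generality that $S_0 = 0$. Fix $\lambda > 0$ and consider the process
$$M_n = \exp\Big(\lambda S_n - \frac{\lambda^2}{2}\sum_{i=1}^{n} d_i^2\Big), \qquad M_0 = 1.$$
I would first argue that $(M_n)$ is a supermartingale. Since $M_n = M_{n-1}\exp\!\big(\lambda d_n - \tfrac{\lambda^2}{2} d_n^2\big)$ and the scalar identity $\lambda u - \tfrac{\lambda^2}{2}u^2 = \tfrac12 - \tfrac12(\lambda u - 1)^2 \le \tfrac12$ holds for all $u \in \mathbb{R}$, each factor is bounded by $e^{1/2}$; hence $M_n \le e^{n/2}$ is integrable and all the conditional expectations below are well defined, so no localization is needed at this stage.

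The key step is to show
$$\mathbb{E}\Big[\exp\Big(\lambda d_n - \frac{\lambda^2}{2} d_n^2\Big)\,\Big|\,\mathcal{F}_{n-1}\Big] \le 1.$$
Here I would use conditional symmetry: because the conditional law of $d_n$ given $\mathcal{F}_{n-1}$ is symmetric about $0$, the conditional expectation of any odd function of $d_n$ vanishes. Splitting $e^{\lambda d_n} = \cosh(\lambda d_n) + \sinh(\lambda d_n)$ and noting that $u \mapsto e^{-\lambda^2 u^2/2}\sinh(\lambda u)$ is odd, the $\sinh$ contribution drops out, leaving
$$\mathbb{E}\Big[e^{-\lambda^2 d_n^2/2}\cosh(\lambda d_n)\,\Big|\,\mathcal{F}_{n-1}\Big].$$
The elementary inequality $\cosh(u) \le e^{u^2/2}$, immediate from the termwise comparison $(2k)! \ge 2^k k!$ of the two power series, applied with $u = \lambda d_n$ makes the integrand pointwise $\le 1$. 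This yields the claim and hence $\mathbb{E}[M_n\mid \mathcal{F}_{n-1}] \le M_{n-1}$.

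With the supermartingale property in hand, the rest is routine. On the event $A = \{S_n \ge x,\ \sum_{i=1}^n d_i^2 \le y\}$ and for $\lambda > 0$ one has the pointwise lower bound $M_n \ge \exp\!\big(\lambda x - \tfrac{\lambda^2}{2} y\big)$, so
$$\mathbb{P}(A) \le \exp\Big(-\lambda x + \frac{\lambda^2}{2} y\Big)\,\mathbb{E}[M_n] \le \exp\Big(-\lambda x + \frac{\lambda^2}{2} y\Big),$$
using $\mathbb{E}[M_n] \le \mathbb{E}[M_0] = 1$. Finally I would optimize the exponent over $\lambda > 0$: the minimum is attained at $\lambda = x/y$ and equals $-x^2/(2y)$, giving the stated bound $\exp(-x^2/(2y))$.

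I expect the main conceptual obstacle to be the verification of the supermartingale property, namely $\mathbb{E}[\exp(\lambda d_n - \tfrac{\lambda^2}{2}d_n^2)\mid \mathcal{F}_{n-1}] \le 1$; this is precisely where conditional symmetry is indispensable, since without it only the weaker bound coming from $\mathbb{E}[e^{\lambda d_n}\mid\mathcal{F}_{n-1}]$ (which requires controlling the increments, as in Freedman's theorem) would be available. The remaining points are mechanical: checking integrability, handled by the uniform bound $e^{1/2}$ on each factor, and the one-variable optimization over $\lambda$.
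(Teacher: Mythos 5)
Your proof is correct, and it is exactly the supermartingale argument the paper attributes to De la Pe\~{n}a (the paper itself only cites Theorem \ref{d1} without reproving it, noting that ``De la Pe\~{n}a constructed a supermartingale''): the process $\exp\{\lambda S_n-\frac{\lambda^2}{2}\sum_{i\le n}(S_i-S_{i-1})^2\}$ is shown to be a supermartingale via conditional symmetry and $\cosh(u)\le e^{u^2/2}$, followed by the optional-stopping/Markov step and optimization in $\lambda$. This is also the same template the paper uses for its continuous-time results (compare the process $V_t$ in the proof of Theorem \ref{mr2}), so no gaps to report.
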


This result is quite different from the classical Freedman's inequality.    The challenge for obtaining Theorem \ref{d1} is to find an approach based on the use of the exponential Markov’s inequality.  De la Pe\~{n}a constructed a supermartingale to get Theorem \ref{d1}.  Furthermore, Bercu and Touati \cite{bta} established the following result for self-normalized martingales, which are similar to Theorem \ref{d1}. 

 \begin{theorem} \label{bt1}(Bercu and Touati \cite{bta} ) Let  $S=(S_n)_{n\ge 0}$ be a locally square integrable martingale on $(\Omega, \mathcal{F}, (\mathcal{F}_n)_{n\ge 1}, \mathbb{P})$.  Then, for all $x,y>0$, $a\ge 0$ and $b>0$, 
            $$\mathbb{P}\big(\frac{|S_n|}{a+b<S,S>_n}\ge x,<S,S>_n\ge\sum_{i=1}^{n}(S_i-S_{i-1})^2+ y\big)\le 2\exp\{-x^2(ab+\frac{b^2y}{2})\}.$$
\end{theorem}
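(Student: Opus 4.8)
The plan is to adapt de la Pe\~na's supermartingale method to the self-normalized setting by building a single exponential supermartingale that simultaneously carries the predictable quadratic variation $\langle S,S\rangle_n$ and the realized one $[S]_n := \sum_{i=1}^n (S_i-S_{i-1})^2$, and then to optimize the free exponential parameter directly on the target event. Write $\xi_i = S_i - S_{i-1}$ and $v_i = \mathbb{E}[\xi_i^2\mid\mathcal{F}_{i-1}]$, so that $\langle S,S\rangle_n = \sum_{i=1}^n v_i$ and $[S]_n = \sum_{i=1}^n \xi_i^2$.

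\emph{Step 1 (the exponential supermartingale).} First I would establish, for every fixed $\lambda\in\mathbb{R}$, that
\[
  V_n(\lambda) \;=\; \exp\Big\{\lambda S_n - \tfrac{\lambda^2}{2}\big([S]_n + \langle S,S\rangle_n\big)\Big\}
\]
is a nonnegative supermartingale with $\mathbb{E}[V_n(\lambda)]\le 1$. The crux is the elementary pointwise bound
\[
  \exp\Big\{u - \tfrac{u^2}{2}\Big\} \;\le\; 1 + u + \tfrac{u^2}{2}, \qquad u\in\mathbb{R},
\]
which I would prove by checking that $g(u) = 1+u+\frac{u^2}{2}-\exp\{u-\frac{u^2}{2}\}$ satisfies $g(0)=g'(0)=0$ and is globally convex (one computes $g''(u) = 1 - u(u-2)\exp\{u-\frac{u^2}{2}\}$ and verifies that the subtracted term never exceeds $2/e<1$, attaining its maximum at $u=1\pm\sqrt3$). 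Applying this with $u=\lambda\xi_i$ and taking conditional expectation, the martingale property $\mathbb{E}[\xi_i\mid\mathcal{F}_{i-1}]=0$ gives
\[
  \mathbb{E}\Big[\exp\Big\{\lambda\xi_i - \tfrac{\lambda^2}{2}\xi_i^2\Big\}\,\Big|\,\mathcal{F}_{i-1}\Big] \;\le\; 1 + \tfrac{\lambda^2}{2}v_i \;\le\; \exp\Big\{\tfrac{\lambda^2}{2}v_i\Big\},
\]
and since $[S]_i-[S]_{i-1}=\xi_i^2$ and $\langle S,S\rangle_i-\langle S,S\rangle_{i-1}=v_i$, this is exactly $\mathbb{E}[V_i(\lambda)\mid\mathcal{F}_{i-1}]\le V_{i-1}(\lambda)$. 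Integrability is automatic, since each factor $\exp\{\lambda\xi_i-\frac{\lambda^2}{2}\xi_i^2\}\le e^{1/2}$ is bounded.

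\emph{Step 2 (optimizing on the event).} Fix $x,y>0$, $a\ge0$, $b>0$ and consider the upper-tail event
\[
  A^+ = \Big\{ S_n \ge x\big(a+b\langle S,S\rangle_n\big),\ \langle S,S\rangle_n \ge [S]_n + y \Big\}.
\]
On $A^+$ I would use $[S]_n \le \langle S,S\rangle_n - y$, so that $[S]_n+\langle S,S\rangle_n \le 2\langle S,S\rangle_n - y$, and then specialize the free parameter to $\lambda = bx$. A short computation shows that the coefficient of $\langle S,S\rangle_n$ cancels exactly, leaving
\[
  \lambda S_n - \tfrac{\lambda^2}{2}\big([S]_n+\langle S,S\rangle_n\big) \;\ge\; x^2\Big(ab + \tfrac{b^2 y}{2}\Big) \qquad\text{on } A^+,
\]
i.e.\ $V_n(bx)\ge \exp\{x^2(ab+\frac{b^2y}{2})\}$ on $A^+$. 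The exponential Markov inequality together with $\mathbb{E}[V_n(bx)]\le1$ then yields $\mathbb{P}(A^+)\le\exp\{-x^2(ab+\frac{b^2y}{2})\}$.

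\emph{Step 3 (symmetrization and the obstacle).} Since $-S$ is again a locally square integrable martingale with the same $\langle S,S\rangle_n$ and $[S]_n$, applying Step 2 to $-S$ (equivalently, taking $\lambda=-bx$) bounds the lower-tail event $A^-$ by the same quantity. As $a+b\langle S,S\rangle_n>0$, the event in the statement is $A^+\cup A^-$, and the union bound produces the factor $2$. The main obstacle I anticipate is Step 1 for \emph{non}-symmetric martingales: de la Pe\~na's symmetric argument controls $\mathbb{E}[\exp\{\lambda\xi_i-\frac{\lambda^2}{2}\xi_i^2\}\mid\mathcal{F}_{i-1}]$ by $1$ via $\cosh$, whereas here the odd part of the increment forces the weaker quadratic bound $1+u+\frac{u^2}{2}$, and it is precisely this that requires carrying the extra $\langle S,S\rangle_n$ term in $V_n(\lambda)$ alongside $[S]_n$.
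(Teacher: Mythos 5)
The paper does not prove this statement; Theorem \ref{bt1} is quoted from Bercu and Touati \cite{bta} without proof, so there is no in-paper argument to compare against. Your proof is correct and is essentially the original Bercu--Touati argument: the supermartingale $V_n(\lambda)=\exp\{\lambda S_n-\frac{\lambda^2}{2}([S]_n+\langle S,S\rangle_n)\}$ built from the elementary bound $e^{u-u^2/2}\le 1+u+\frac{u^2}{2}$ (your verification via $g''(u)=1-u(u-2)e^{u-u^2/2}\ge 1-2/e>0$ is sound), followed by the choice $\lambda=bx$, which makes the $\langle S,S\rangle_n$ terms cancel on the event, and a union bound over $\pm S$ for the factor $2$. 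This is also consonant with the paper's own continuous-time analogue (Theorem \ref{mr4}), whose proof rests on the same inequality in the form $|\exp\{x-\frac{1}{2}x^2\}-1-x|\le\frac{1}{2}x^2$.
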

 
It is natural to ask  what will happen when we study the continuous-time processes for the above cases?   Let $(\Omega, \mathcal{F}, (\mathcal{F}_t)_{t\ge 0}, \mathbb{P})$ be a stochastic basis.  $M=(M_t)_{t\ge 0}$ is a continuous locally square integrable martingale.     The predictable quadratic variation of  $M$, $<M,M>$, is a continuous increasing process, such that $(M^2_t-<M,M>_t)_{t\ge 0}$ is a  local martingale.  However, we can not define an analogy for $M$ like $\sum_{i=1}^{n}(S_i-S_{i-1})^2$ in  Theorems \ref{d1} and \ref{bt1}.  Since  $M=(M_t)_{t\ge 0}$ has jumps, we can replace $\sum_{i=1}^{n}(S_i-S_{i-1})^2$  by $\sum_{s\le t}|\triangle M_s|^2$. It is an interesting problem to consider De la Pe\~{n}a type  inequalities for continuous-time local   square integrable martingale with jumps. Some authors obtained the  concentration inequalities for continuous-time stochastic processes. Bernstein's inequality for local martingales with jumps was given by van der Geer \cite{v}. Khoshnevisan \cite{k} found some concentration inequalities for continuous martingales.  Dzhaparidze and van Zanten \cite{dv} extended Khoshnevisan's results to martingales with jumps. 

This paper focuses on the De la Pe\~{n}a type  inequalities for stochastic integrals of multivariate point processes.    Stochastic integrals of multivariate point processes are an essential example of purely discontinuous local martingales.    Let$(\Omega, \mathcal{F}, (\mathcal{F}_t)_{t\ge 0}, \mathbb{P})$  be a stochastic basis.  A stochastic process $M=(M_t)_{t\ge 0}$ is called a purely discontinuous local martingale if $M_0=0$ and $M$ is orthogonal to all continuous local martingales.  The reader is referred to the classic book \cite{js} due to Jacod and Shiryayev  for more information.  We shall restrict ourselves to the integer-valued random measure $\mu$ on $\mathbb{R}_+\times \mathbb{R}$ induced by
a $\mathbb{R}_+\times \mathbb{R}$-valued multivariate point process. In particular, let $(T_{k}, Z_{k}), k\ge 1,$ be a  multivariate point process,  and define
 \begin{equation}
 \mu(dt,dx)=\sum_{k\ge 1}\mathbf{1}_{\{T_{k}<\infty\}}\varepsilon_{(T_{k},Z_{k})}(dt, dx), \label{mp}
 \end{equation}
 where $\varepsilon_{(T_{k},Z_{k})}$ is the delta measure at point $(T_{k},Z_{k})$. Then $\mu(\omega; [0, t]\times \mathbb{R})<\infty$ for all $(\omega, t)\in \Omega\times  \mathbb{R}$.  Let $\tilde{\Omega}=\Omega \times \mathbb{R}_+\times \mathbb{R}$, $\tilde{\mathcal{P}}=\mathcal{P}\otimes \mathcal{B}$, where $\mathcal{B}$ is a Borel $\sigma$-field on $\mathbb{R}$ and $\mathcal{P}$ a $\sigma$-field  generated by all left continuous adapted processes  on $\Omega \times \mathbb{R}_+$. The predictable function is a $\tilde{\mathcal{P}}$-measurable function on $\tilde{\Omega}$.  Let   $\nu$  be the unique predictable compensator of $\mu$ (up to a $ \mathbb{P}$-null set). Namely, $\nu$ is  a  predictable random measure such that for any predictable function $W$, $W\ast  \mu-W\ast\nu$ is a  local martingale, where the $W\ast  \mu$ is defined by
\begin{eqnarray}
W\ast \mu_t \nonumber= \left\{
\begin{array}{ll}
 \int_0^t\int_{\mathbb{R}}W( s,x)\mu(  ds, dx),& \mbox{if} \int_0^t\int_{\mathbb{R}}|W( s,x)|\mu( ds, dx)<\infty, \\
 \\
 +\infty & \mbox{otherwise}.
 \end{array}
 \right. \nonumber\label{StarIn}
\end{eqnarray}
  Note the $\nu$ admits the disintegration
\begin{equation}\label{pp1}
 \nu( dt,dx)= dA_t K(\omega, t; dx),
 \end{equation} 
  where $K(\cdot, \cdot)$ is a  transition kernel from $(\Omega\times \mathbb{R}_+ , \mathcal{P})$ into $(\mathbb{R},\mathcal{B})$, and $A=(A_t)_{t\ge 0}$ is an increasing c\`{a}dl\'{a}g predictable process.   For  $\mu$ in (\ref{mp}), which is defined through  multivariate point process,    $\nu$ admits 
 \begin{equation}
  \nu(dt, dx)=\sum_{n\ge 1}\frac{1}{G_n([t, \infty]\times \mathbb{R})}\mathbf{1}_{\{t\le T_{n+1}\}}G_n(dt, dx),\nonumber
 \end{equation} where $G_n(\omega, ds, dx)$ is a regular version of the conditional distribution of $(T_{n+1}, Z_{n+1})$ with respect to   $\sigma\{T_1, Z_1, \cdots, T_n, Z_n\}$.  In particular, if $F_n(dt)=G_n(dt\times \mathbb{R})$, the point process $N=\sum_{n\ge 1} \mathbf{1}_{[T_n, \infty)}$ has the compensator $A_t=\nu([0, t]\times \mathbb{R})$, which satisfies 
 \begin{equation}
   A_t= \sum_{n\ge 1} \int_{0}^{T_{n+1}\wedge t}\frac{1}{F_n([s, \infty])}F_n(ds).\nonumber
 \end{equation}

Now, we  define the stochastic integrals of multivariate point processes. For a stopping time $T$, $[T]=\{(\omega,t):T(\omega)=t\}$  is the graph of $T$.  For $\mu$ in (\ref{mp}),  define $D=\bigcup_{n=1}^{\infty}[T_n]$.
 With any measurable function $W$ on $\tilde{\Omega}$,  we  define  $a_t=\nu( \{t\}\times \mathbb{R})$, and 
 \begin{eqnarray}
\hat{W}\ast \nu_t \nonumber= \left\{
\begin{array}{ll}
 \int_{\mathbb{R}}W(t,x)\nu( \{t\}\times dx), & \mbox{if}  \int_{\mathbb{R}}|W(t,x)|\nu( \{t\}\times dx)<\infty, \\
 \\
 +\infty & \mbox{otherwise}.
 \end{array}
 \right. \nonumber\label{StarIn}
\end{eqnarray}

 We denote by $G_{loc}(\mu)$ the set of all $\tilde{\mathcal{P}}-$measurable real-valued functions $W$ such that  $[\sum_{s\le t}(\widetilde{W}_s)^2]^{1/2}$ is local integrable variation process, where  $\widetilde{W}_t=W1_{D}(\omega,t)-\hat{W}_t$.

 \begin{definition}
 If $W \in G_{loc}(\mu)$,  the stochastic integral of $W$ with respect to $\mu-\nu$ is defined as a purely discontinuous local martingales, which jump process is indistinguishable from $\widetilde{W}$.  \end{definition}
 
  We denote the stochastic integral of $W$ with respect to $\mu-\nu$  by  $W\ast (\mu-\nu)$.  For a given  predictable function $W$, $W\ast (\mu-\nu)$ is a purely discontinuous local martingale, which is defined  through jump process.  It is easy to prove that $W\ast (\mu-\nu)=W\ast \mu-W\ast \nu$. Denote $M=W\ast (\mu-\nu)$.
  Under some conditions, Wang, Lin and Su \cite{wls} obtained 
                              \begin{equation}
\label{q0}
\mathbb{P}\Big(M_t\ge x, <M,M>_t\le v^2 ~\mbox{for some}~t>0\Big)\le \exp\{-\frac{x^2}{2(v^2+cx)}\}\end{equation}  
  where $<M,M>$ is the  predictable quadratic variation process of $M=W\ast (\mu-\nu)$,  
                       $$<M,M>_t=(W-\hat{W})^2\ast \nu_t+\sum_{1\le s\le t}(1-a_s)\hat{W}_s^2.$$
    When $M$ is a purely discontinuous local martingale, $ \sum_{s\le \cdot}|\Delta M_s|^2-<M,M>$ is a local martingale.  
There will be  an interesting problem when the predictable quadratic variation $<M,M>$ in (\ref{q0}) is replaced by the 
    quadratic variation $ \sum_{s\le \cdot}|\Delta M_s|^2$ .  In this paper, we will estimate the upper bound of two types of tail probabilities:
                       \begin{equation}
\label{q1}
\mathbb{P}\Big(M_t\ge x, \sum_{s\le t}|\Delta M_s|^2\le v^2 ~\mbox{for some}~t>0\Big)\end{equation}
        and  
                       \begin{equation}
\label{q2}
\mathbb{P}\Big(M_t\ge (\alpha+\beta \sum_{s\le t}|\triangle M_s|^2)x, \,\, \sum_{s\le t}|\triangle M_s|^2\ge <M,M>_t+ v^2~\mbox{for some}~t>0\Big ).\end{equation}

It is important to note that the continuity of  $A$  implies the quasi-left continuity of  $M$.  However, the quasi-left continuity  of $M$ can be destroyed easily by changing the filtration in the underlying space.  For example, Let $N$ be a homogeneous Poisson process with respect to  $\mathbb{F}$. Let $(T_n)_{n\ge 0}$ be the sequence of the jump-times of $N$.The process $N$ is not quasi-left continuous in
the filtration $\mathbb{G}$ obtained by enlarging  $\mathbb{F}$ initially with  the $\sigma-$field $\mathcal{R}=\sigma(T_1)$. (
$\sigma_n=(1-\frac{1}{2n})T_1$ is  a sequence of $\mathbb{G}$
-stopping times announcing $T_1$). The main purpose of this paper consists in  estimating  (\ref{q1}) and (\ref{q2}) when $M$ is not quasi-left continuous.

The structure  of the paper is as follows. Section 2 will present our main results  and  give their proofs,  while Section 3 will derive an exponential  inequality for block counting process in $\Lambda-$coalescents as applications.   Usually,  $c,C,K,\cdots$ denote positive constants,  which very often may be different at each occurrence.

\section{ The main results and their proofs}\label{secR}
\setcounter{equation}{0}
\renewcommand{\theequation}{\thesection.\arabic{equation}}

Now, we present our first main result. 

 \begin{theorem}  \label{mr1}Let  $\mu$ be  a multivariate point process, $\nu$ be the predictable  compensator of $\mu$, $a_t=\nu(\{t\}\times \mathbb{R})$, $W$ be a given predictable function  on  $\tilde{\Omega}$, and $W\in  G_{loc}(\mu)$. $M=W\ast (\mu-\nu)$.  Assume $\Delta M\ge -1$. Then for $x>0$, $v>0$, 
   \begin{eqnarray*}\mathbb{P}\Big(M_t\ge x, \, \sum_{s\le t}|\triangle M_s|^2\le v^2\mbox{~for some ~}~t>0\Big)\le    \Big( \frac{v^2+x}{v^2}\Big)^{v^2}e^{-x}.
 \end{eqnarray*}  

\end{theorem}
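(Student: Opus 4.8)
The plan is to run the classical exponential supermartingale argument adapted to purely discontinuous local martingales, using the Dol\'eans-Dade exponential in place of the naive $e^{\lambda M}$. Fix $\lambda\in(0,1)$ and set $Z^{\lambda}=\mathcal{E}(\lambda M)$. Since $M$ is a purely discontinuous local martingale, its stochastic exponential has the explicit form $Z^{\lambda}_t=\exp(\lambda M_t)\prod_{s\le t}(1+\lambda\Delta M_s)e^{-\lambda\Delta M_s}$ and is a local martingale with $Z^{\lambda}_0=1$. The assumption $\Delta M\ge-1$ together with $\lambda<1$ gives $1+\lambda\Delta M_s\ge1-\lambda>0$, so $Z^{\lambda}$ is \emph{strictly positive}, hence a nonnegative supermartingale. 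This positivity is exactly why the restriction $\Delta M\ge-1$ and the choice $\lambda<1$ enter the argument.

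Next I would extract from $Z^{\lambda}$ a bound of the form $Z^{\lambda}_t\ge\exp(\lambda M_t-g(\lambda)V_t)$, where $V_t=\sum_{s\le t}|\Delta M_s|^2$. Taking logarithms in the product formula gives $\log Z^{\lambda}_t=\lambda M_t-\sum_{s\le t}\big(\lambda\Delta M_s-\log(1+\lambda\Delta M_s)\big)$, so it suffices to establish the scalar inequality
\[
\lambda u-\log(1+\lambda u)\le g(\lambda)\,u^2,\qquad u\ge-1,
\]
with $g(\lambda)=-\lambda-\log(1-\lambda)$, and then sum over the jumps with $u=\Delta M_s$. I expect this elementary inequality to be the \textbf{main obstacle}: one reduces it to showing that $\phi(u)=\big(\lambda u-\log(1+\lambda u)\big)/u^2$ is maximised over $[-1,\infty)$ at the endpoint $u=-1$, where $\phi(-1)=g(\lambda)$. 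That the bound is tight precisely at $u=-1$ dovetails with the hypothesis $\Delta M\ge-1$; the verification is a one-variable calculus argument, equivalently showing that the auxiliary function $g(\lambda)u^2-\lambda u+\log(1+\lambda u)$ is nonnegative, using that it vanishes at both $u=-1$ and $u=0$.

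With the supermartingale bound in hand, I would introduce the stopping time $\tau=\inf\{t>0:\ M_t\ge x,\ V_t\le v^2\}$. On the event $A=\{M_t\ge x,\ V_t\le v^2\text{ for some }t>0\}$ one has $\tau<\infty$, and by right-continuity of $M$ and monotonicity of $V$ the two defining inequalities are inherited at $\tau$, giving $M_\tau\ge x$ and $V_\tau\le v^2$; hence $Z^{\lambda}_\tau\ge\exp(\lambda x-g(\lambda)v^2)$ on $A$. Applying optional stopping to the nonnegative supermartingale $Z^{\lambda}$ (via $\mathbb{E}[Z^{\lambda}_{\tau\wedge t}]\le1$ and Fatou's lemma as $t\to\infty$) yields $\exp(\lambda x-g(\lambda)v^2)\,\mathbb{P}(A)\le\mathbb{E}[Z^{\lambda}_{\tau}\mathbf{1}_A]\le1$, that is
\[
\mathbb{P}(A)\le\exp\big(g(\lambda)v^2-\lambda x\big).
\]
The delicate point here is to confirm that both inequalities genuinely hold at $\tau$ even when $\tau$ is a jump time, which is where the maximal ``for some $t$'' formulation is absorbed.

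Finally I would optimise in $\lambda$. Minimising $g(\lambda)v^2-\lambda x$ gives $g'(\lambda)=\lambda/(1-\lambda)=x/v^2$, i.e. $\lambda=x/(v^2+x)\in(0,1)$. Substituting this value, the exponent collapses to $v^2\log\frac{v^2+x}{v^2}-x$, which reproduces the claimed bound $\big(\frac{v^2+x}{v^2}\big)^{v^2}e^{-x}$ and completes the argument.
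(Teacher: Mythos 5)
Your proof is correct, but it takes a genuinely different route from the paper's. The paper never works with $\mathcal{E}(\lambda M)$ directly: it introduces an auxiliary predictable process $S(\lambda)_t$ built from the compensator $\nu$ (the integral of $e^{\lambda(W-\hat W)-(\lambda+\log(1-\lambda))(W-\hat W)^2}-1-\lambda(W-\hat W)$ against $\nu$, plus the atom correction terms), verifies $\Delta S(\lambda)>-1$, proves via It\^o's formula that $e^{X}/\mathcal{E}(S(\lambda))$ with $X_t=\lambda M_t-(\lambda+\log(1-\lambda))\sum_{s\le t}(\Delta M_s)^2$ is a local martingale, and then shows $S(\lambda)\le 0$ so that $U=e^{X}/e^{S(\lambda)}$ is a supermartingale. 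You instead exploit the explicit product formula for $\mathcal{E}(\lambda M)$ of a purely discontinuous local martingale and apply the scalar inequality $\lambda u-\log(1+\lambda u)\le\big(-\lambda-\log(1-\lambda)\big)u^2$ \emph{pathwise} to each jump, obtaining $\mathcal{E}(\lambda M)_t\ge\exp\{\lambda M_t-g(\lambda)V_t\}$ without any compensator computation. Both arguments hinge on exactly the same elementary inequality (the paper quotes it as (4.12) of Fan--Grama--Liu, in the exponentiated form $e^{\lambda u+u^2(\lambda+\log(1-\lambda))}\le 1+\lambda u$ for $u\ge-1$), and both finish with the same stopping time, optional stopping for a nonnegative supermartingale, and the same optimization $\lambda=x/(v^2+x)$. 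Your route is shorter and self-contained for this particular theorem; the paper's heavier construction of $S(\lambda)$ and the ratio local martingale $e^{X}/\mathcal{E}(S(\lambda))$ is what gets reused (as Proposition 2.1) in Theorems 2.2--2.5, where $\Delta M\ge-1$ is not assumed and the pathwise positivity of $\mathcal{E}(\lambda M)$ is no longer available, so the negativity hypothesis is shifted onto the compensated quantity $\widetilde S(\lambda)$ instead. The only point to tighten in a write-up is the one you already flag: the verification of the one-variable inequality (equivalently, that $g(\lambda)u^2-\lambda u+\log(1+\lambda u)\ge 0$ on $[-1,\infty)$, with equality at $u=-1$ and $u=0$), which you should either prove or cite as in the paper.
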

 
\begin{proof}
 
 For simplicity of notation, put 
 \begin{eqnarray*}
 S(\lambda)_{t}&=&\int_{0}^{t}\int_{\mathbb{R}}(e^{[\lambda (W-\hat{W})-(\lambda+\log(1-\lambda))(W-\hat{W})^2]}-1-\lambda (W-\hat{W}))\nu(ds,dx)\\
 & &+\sum_{s\le t}(1-a_{s})(e^{[-\lambda \hat{W}_{s}+(\lambda+\log(1-\lambda))(\hat{W}_s)^2]}-1+\lambda \hat{W}_{s}),  
   \end{eqnarray*}   
 where $\lambda \in [0,1)$.
 
Furthermore, 
 \begin{eqnarray*}
                 \Delta S(\lambda)_{t}
                &  = & \int_{\mathbb{R}} \Big(e^{[\lambda (W-\hat{W})-(\lambda+\log(1-\lambda))(W-\hat{W})^2]}-1-\lambda (W-\hat{W})\Big)\nu(\{t\}, dx)\\
                & &  +(1-a_t)(e^{[-\lambda \hat{W}_{t}+(\lambda+\log(1-\lambda))(\hat{W}_t)^2]}-1+\lambda \hat{W}_t)\\
                &  = & e^{[-\lambda \hat{W}_{t}-(\lambda+\log(1-\lambda))(\hat{W}_{t})^2]} \Big(\int_{\mathbb{R}} e^{[\lambda W-(\lambda+\log(1-\lambda))(W^2-2W\hat{W})]} \nu(\{t\},dx)+1-a_t\Big) \\
                & &+(1-a_t)(-1+\lambda \hat{W}_t)
                - \int_{\mathbb{R}} \big(1+\lambda (W-\hat{W})\big)\nu(\{t\}, dx)\\
                & = & e^{[-\lambda \hat{W}_{t}-(\lambda+\log(1-\lambda))(\hat{W}_{t})^2]} \Big(\int_{\mathbb{R}} e^{[\lambda W-(\lambda+\log(1-\lambda))(W^2-2W\hat{W})]} \nu(\{t\},dx)+1-a_t\Big)+(1-a_t)\lambda \hat{W}_t\\
                & &  -1 - \lambda \int_{\mathbb{R}} (W-\hat{W})\nu(\{t\}, dx).
                  \nonumber
 \end{eqnarray*}
 In addition, it is easy to see by noting $a_t\le 1$
 \begin{equation}
\int_{\mathbb{R}} e^{[\lambda W-(\lambda+\log(1-\lambda))(W^2-2W\hat{W})]} \nu(\{t\},dx)+1-a_t\ge 0 , \nonumber
  \end{equation}
    and
    \begin{equation}
(1-a_t)\lambda \hat{W}_t=\lambda \int_{\mathbb{R}} (W-\hat{W})\nu(\{t\}, dx). \nonumber
  \end{equation}
In combination,  we have for all $t>0$
    \begin{equation}
    \Delta S(\lambda)_{t} >-1, \nonumber
     \end{equation}
where $\lambda \in [0,1)$. For any semimartingale $S(\lambda)_{t}$,  the Dol\'{e}ans-Dade exponential  is
   \begin{equation}
    \mathcal{E}(S(\lambda))_{t}=e^{S(\lambda)_{t}-S(\lambda)_{0}-\frac{1}{2}<S(\lambda)^{c},S(\lambda)^{c}>_{t}}\prod_{s\le t}(1+\Delta S(\lambda)_{t})e^{-\Delta S(\lambda)_{t}}.\nonumber
    \end{equation}
We shall first show that the process  $\Big(e^{[\lambda M_t-(\lambda+\log(1-\lambda))\sum_{s\le t}(\Delta M_s)^2)]}/\mathcal{E}(S(\lambda))_t\Big)_{t\ge 0}$ is a local martingale.  Denote $X_t=\lambda M_t-(\lambda+\log(1-\lambda))\sum_{s\le t}(\Delta M_s)^2$, $Y_t=\sum_{s\le t}(\Delta M_s)^2$.

The It\^{o} formula yields
        \begin{eqnarray*}
        e^{ X_{t}}&=&1+e^{X_{t-}}\cdot X+\sum_{s\le t}(e^{ X_{s}}-e^{ X_{s-}}-e^{ X_{s-}}\Delta X_{s} )\\
                          &=&1+\lambda e^{X_{t-}}\cdot M-(\lambda+\log(1-\lambda))e^{X_{t-}}\cdot Y\\
                          & &+       \sum_{s\le t}(e^{ X_{s}}-e^{ X_{s-}}-e^{ X_{s-}}\Delta X_{s} ) \\       
                          &=&1+\lambda e^{X_{t-}}\cdot M  + \sum_{s\le t}(e^{ X_{s}}-e^{ X_{s-}}-\lambda e^{ X_{s-}}\Delta M_{s} ). \\      
        \end{eqnarray*}
     For $X$, the jump part of $X$ is
  \begin{eqnarray*}
   \Delta X&=&[\lambda(W-\hat{W})-(\lambda+\log(1-\lambda))(W-\hat{W})^2]1_{D}\\
    & &   -\lambda\hat{W}1_{D^{c}}+(\lambda+\log(1-\lambda))\hat{W}^21_{D^{c}}
   \end{eqnarray*}
where $D$ is the thin set, which is  exhausted by $\{T_n\}_{n\ge 1}$. Thus     \begin{equation}
    \sum_{s\le t}(e^{ \Delta  X_{s}}-1-\lambda \Delta M_{s} )-S(\lambda)=:Z_t
        \end{equation}
  is a local martingale. Furthermore,  we obtain that
     \begin{equation}
     e^{ X}-e^{ X_{-}}\cdot S(\lambda)=1+\lambda e^{X_{t-}}\cdot M + e^{X_{t-}}\cdot Z =:N_1   \end{equation}
     is a local martingale. Following the similar arguments in Wang Lin and Su \cite{wls}, we have
      $\Big(e^{ X_t}/\mathcal{E}(S(\lambda))_t\Big)_{t\ge 0}$
is a local martingale.   In fact,   set $H=e^{X}$, $G=\mathcal{E}(S(\lambda))$, $A=S(\lambda)$ and $f(h,g)=\frac{h}{g}.$  The  It\^{o} formula yields
             \begin{eqnarray*}
             f(H,G)&=&1+\frac{1}{G_{-}}\cdot H-\frac{H_{-}}{G^{2}_{-}}\cdot G\\
             & &+\sum_{s\le \cdot}\Big(\Delta f(H,G)_{s}-\frac{\Delta H_{s}}{G_{s-}}+\frac{f(H,G)_{s-}}{G_{s-}}\Delta G_{s}\Big).
             \end{eqnarray*}
Since  $\mathcal{E}(S(\lambda))=1+\mathcal{E}(S(\lambda))_{-}\cdot S(\lambda)$,  we have 
      \begin{equation}
      \frac{1}{G_{-}}\cdot H-\frac{H_{-}}{G^{2}_{-}}\cdot G=\frac{1}{G_{-}}\cdot N_1.
      \end{equation}
Noting that  $\Delta G=G_{-}\Delta A$, $\Delta N_{1}=\Delta H-H_{-}\Delta A$, we have
          \begin{eqnarray*}
           \Delta f(H,G)_{s}-\frac{\Delta H_{s}}{G_{s-}}+\frac{f(H,G)_{s-}}{G_{s-}}\Delta G_{s} =-\frac{\Delta N_{1s}\Delta A_{s}}{G_{s-}(1+\Delta A_{s})},
               \end{eqnarray*}
where $A$ is a predictable process, and $N$ is a local martingale. By the property of  the Stieltjes integral, we have
     \begin{equation}
     \sum_{s\le \cdot }\Delta f(H,G)_{s}-\frac{\Delta H_{s}}{G_{s-}}+\frac{f(H,G)_{s-}}{G_{s-}}\Delta G_{s}= -\frac{\Delta A}{G_{-}(1+\Delta A)} \cdot N_1.
         \end{equation}
Thus $$\Big(e^{ X}/\mathcal{E}(S(\lambda))\Big)=1+\frac{1}{G_{-}}\cdot N_1-\frac{\Delta A}{G_{-}(1+\Delta A)} \cdot N_1$$
is a local martingale.

 Let
$$
B_1=\{M_t\ge x, \sum_{s\le t}|\triangle M_s|^2\le v^2 \mbox{~for some~} t>0\}
$$
and
$$
\tau_1=\inf\{t>0:  M_t\ge x, \sum_{s\le t}|\triangle M_s|^2\le v^2\}.
$$
Note by (4.12) in  \cite{fgl2}, for $\lambda \in [0,1)$ and   $x\ge -1$,
$$
\exp\{\lambda x+x^2(\lambda+\log (1-\lambda))\}\le 1+\lambda x.
$$
This implies
\begin{equation}\label{2q1}
   \int_0^t\int_{-1}^{ \infty}\exp\{\lambda x+(\lambda+\log(1-\lambda))x^2\}\nu^M(ds, dx)\le \int_0^t\int_{-1}^{ \infty}(1+\lambda x)\nu^M(ds, dx),
\end{equation}
because  $\Delta M_t\ge -1$  for any $t>0$,  where $\nu^M$ is the predictable compensate jump measure of $M$. 
Inequality (\ref{2q1}) implies  $S(\lambda)\le 0$.
Since $e^{x}\ge x+1$ and $e^{S(\lambda)_{t}}\ge \mathcal{E}(S(\lambda)_{t})$, 
     \begin{equation}
     \mathbb{E} \frac{e^{\lambda X_{T}}}{e^{S(\lambda)_{T}}}\le  \mathbb{E}  \frac{e^{\lambda X_{T}}}{\mathcal{E}(S(\lambda))_{T}}=1\end{equation}
    for any stopping time $T$.
Thus  $U=(U_t)_{t\ge 0}$ is a supermartingale, where
$$
U_t=\frac{\exp\{\lambda M_t+(\lambda+\log(1-\lambda))\sum_{s\le t}(\triangle M_s)^2\}}{\exp\{S(\lambda)_t\}}.
$$
Thus,  on $B_1$
$$
U_{\tau_{1}}  \ge \exp\{\lambda x+(\lambda+\log(1-\lambda))v^2\}.
$$
 We have 
  \begin{eqnarray}
\mathbb{P}(B_1) &\le&    \inf_{\lambda\in [0,1)}\exp\{-\lambda x-(\lambda+\log(1-\lambda))v^2\}\nonumber\\
&=&  \Big( \frac{v^2+x}{v^2}\Big)^{v^2}e^{-x}.
\label{3.7}
\end{eqnarray}

\end{proof}

Put 
 \begin{eqnarray*}
 L(\lambda)_{t}&=&\int_{0}^{t}\int_{\mathbb{R}}(e^{[\lambda (W-\hat{W})+f(\lambda)(W-\hat{W})^2]}-1-\lambda (W-\hat{W}))\nu(ds,dx)\\
 & &+\sum_{s\le t}(1-a_{s})(e^{[-\lambda \hat{W}_{s}+f(\lambda)(\hat{W}_s)^2]}-1+\lambda \hat{W}_{s}),  
   \end{eqnarray*}   
where $f(\lambda)\ge 0$ for $\lambda\ge 0$.  We have the following  proposition from the proof of Theorem \ref{mr1}.

 \begin{proposition}  \label{p1}Let  $\mu$ be  a multivariate point process , $\nu$ be the predictable  compensator of $\mu$, $a_t=\nu(\{t\}\times \mathbb{R})$, $W$ be a given predictable function  on  $\tilde{\Omega}$.  $M=W\ast (\mu-\nu)$. Denote $\tilde{X}_t=\lambda M_t-f(\lambda)\sum_{s\le t}(\Delta M_s)^2$,   for $\lambda\ge 0$. Then 
              $e^{ \tilde{X}}/\mathcal{E}(L(\lambda))$ is a local martingale. 
 
 \end{proposition}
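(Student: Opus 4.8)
The plan is to isolate the local-martingale part of the proof of Theorem \ref{mr1} and run it verbatim, with the explicit coefficient $-(\lambda+\log(1-\lambda))$ replaced by the arbitrary $f(\lambda)\ge 0$. The essential observation is that in that proof the special form of the coefficient, and the hypothesis $\Delta M\ge -1$, are used only \emph{after} the local martingale $e^{X}/\mathcal E(S(\lambda))$ has been produced: they enter through inequality (\ref{2q1}) and the optional-stopping step that turns this local martingale into a supermartingale. Everything up to and including the statement ``$e^{X}/\mathcal E(S(\lambda))$ is a local martingale'' is insensitive to the choice of coefficient, so it transfers directly to $\tilde X$ and $L(\lambda)$. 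This is exactly the content of Proposition \ref{p1}.

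Concretely, I would first record that, since the jump of $M=W\ast(\mu-\nu)$ is indistinguishable from $\widetilde W=W\mathbf{1}_D-\hat W$, the jump of $\tilde X$ is
\[
\Delta\tilde X_s=\lambda\,\Delta M_s-f(\lambda)(\Delta M_s)^2,
\]
which reads $\lambda(W-\hat W)-f(\lambda)(W-\hat W)^2$ on the thin set $D$ and $-\lambda\hat W-f(\lambda)\hat W^2$ on $D^c$. By its definition $L(\lambda)$ is precisely the predictable compensator of $\sum_{s\le\cdot}\big(e^{\Delta\tilde X_s}-1-\lambda\,\Delta M_s\big)$, its two integrands recording the contributions over $D$ and over $D^c$; hence
\[
\tilde Z:=\sum_{s\le\cdot}\big(e^{\Delta\tilde X_s}-1-\lambda\,\Delta M_s\big)-L(\lambda)
\]
is a local martingale, using the compensator property of $\nu$ and the disintegration (\ref{pp1}) exactly as in Theorem \ref{mr1}.

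Next I would apply the It\^o formula to $e^{\tilde X}$. The finite-variation term $-f(\lambda)\sum_{s\le\cdot}(\Delta M_s)^2$ combines with the jump correction so that
\[
e^{\tilde X}=1+\lambda\,e^{\tilde X_-}\cdot M+\sum_{s\le\cdot}\big(e^{\tilde X_s}-e^{\tilde X_{s-}}-\lambda\,e^{\tilde X_{s-}}\Delta M_s\big),
\]
whence $\tilde N_1:=e^{\tilde X}-e^{\tilde X_-}\cdot L(\lambda)=1+\lambda\,e^{\tilde X_-}\cdot M+e^{\tilde X_-}\cdot\tilde Z$ is a local martingale. Finally I would feed $H=e^{\tilde X}$, $G=\mathcal E(L(\lambda))$, $A=L(\lambda)$ into the quotient It\^o formula used for Theorem \ref{mr1}; invoking $\mathcal E(L(\lambda))=1+\mathcal E(L(\lambda))_-\cdot L(\lambda)$, $\Delta G=G_-\Delta A$ and $\Delta\tilde N_1=\Delta H-H_-\Delta A$, the same cancellation gives
\[
\frac{e^{\tilde X}}{\mathcal E(L(\lambda))}=1+\frac{1}{G_-}\cdot\tilde N_1-\frac{\Delta A}{G_-(1+\Delta A)}\cdot\tilde N_1,
\]
a local martingale.

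The step that genuinely needs checking, and which I expect to be the only real obstacle, is that $\mathcal E(L(\lambda))$ is strictly positive, i.e. $\Delta L(\lambda)_t>-1$, so that both $1/G_-$ and $1/(1+\Delta A)$ are well defined. This is where $f(\lambda)\ge 0$ and $a_t\le 1$ are used: factoring $e^{-\lambda\hat W_t-f(\lambda)\hat W_t^2}$ out of $\Delta L(\lambda)_t$ and combining the positivity of the remaining exponential integral with the identity $(1-a_t)\lambda\hat W_t=\lambda\int_{\mathbb R}(W-\hat W)\,\nu(\{t\},dx)$ yields $\Delta L(\lambda)_t>-1$ exactly as in the computation of $\Delta S(\lambda)_t$ in Theorem \ref{mr1}. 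With this positivity in hand every remaining manipulation is formally identical to that proof, so no new estimate is required.
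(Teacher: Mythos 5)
Your proof is correct and follows exactly the route the paper intends: the paper offers no separate argument for Proposition~\ref{p1} beyond the remark that it comes ``from the proof of Theorem~\ref{mr1}'', and your write-up is precisely that proof with the coefficient $-(\lambda+\log(1-\lambda))$ replaced by $-f(\lambda)$ (jump identification, compensation of $\sum_{s\le\cdot}(e^{\Delta\tilde X_s}-1-\lambda\Delta M_s)$ by $L(\lambda)$, the quotient It\^o computation, and the check $\Delta L(\lambda)_t>-1$ via $a_t\le 1$). One caveat worth recording: your identification of $L(\lambda)$ as the compensator requires its exponents to read $\lambda(W-\hat{W})-f(\lambda)(W-\hat{W})^2$ on $D$ and $-\lambda \hat{W}_s-f(\lambda)\hat{W}_s^2$ on $D^c$, which is what $\Delta\tilde X$ actually equals there; the paper's displayed $L(\lambda)$ carries $+f(\lambda)$ in both exponents (a sign slip already present in the second summand of $S(\lambda)$ and $\widetilde S(\lambda)$), and you silently and correctly repair it.
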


In Theorem \ref{mr1}, the condition $\triangle M\ge -1$ plays an important role. In the following theorem, we will present an another result, which is the analogy of Theorem \ref{d1} in continuous time case.

 \begin{theorem}  \label{mr2}Let  $\mu$ be  a multivariate point process, $\nu$ be the predictable  compensator of $\mu$, $a_t=\nu(\{t\}\times \mathbb{R})$, $W$ be a given predictable function  on  $\tilde{\Omega}$, and $W\in  G_{loc}(\mu)$.  $M=W\ast (\mu-\nu)$,  In addition, define
 \begin{eqnarray*}
 \widetilde{S}(\lambda)_{t}&=:&\int_{0}^{t}\int_{\mathbb{R}}(e^{[\lambda (W-\hat{W})-\frac{\lambda^2}{2}(W-\hat{W})^2]}-1-\lambda (W-\hat{W}))\nu(ds,dx)\\
 & &+\sum_{s\le t}(1-a_{s})(e^{[-\lambda \hat{W}_{s}+\frac{\lambda^2}{2}(\hat{W}_s)^2]}-1+\lambda \hat{W}_{s}),  
   \end{eqnarray*}    
 and assume  that for any $t>0$ and  $\lambda>0$, $\widetilde{S}(\lambda)_{t}\le 0$.    Then for $x>0$, $v>0$, 
   \begin{eqnarray*} \mathbb{P}\Big(M_t\ge x, \, \sum_{s\le t}|\triangle M_s|^2\le v^2\mbox{~for some ~}~t>0\Big)\le   \exp{\{-\frac{x^2}{2v^2}\}}
 \end{eqnarray*}  
\end{theorem}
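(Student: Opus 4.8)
The plan is to run exactly the argument of Theorem \ref{mr1}, but with the coefficient $-(\lambda+\log(1-\lambda))$ replaced by $\frac{\lambda^2}{2}$; concretely, to invoke Proposition \ref{p1} with $f(\lambda)=\frac{\lambda^2}{2}$. With this choice the auxiliary process $L(\lambda)$ of Proposition \ref{p1} coincides with the process $\widetilde{S}(\lambda)$ defined in the statement, and $\tilde{X}_t=\lambda M_t-\frac{\lambda^2}{2}\sum_{s\le t}(\Delta M_s)^2$. Proposition \ref{p1} then tells me directly that $e^{\tilde{X}}/\mathcal{E}(\widetilde{S}(\lambda))$ is a local martingale. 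As in the proof of Theorem \ref{mr1}, I would first verify that $\Delta\widetilde{S}(\lambda)_t>-1$ (the same computation that gave $\Delta S(\lambda)_t>-1$ carries over verbatim with $\frac{\lambda^2}{2}$ in place of $-(\lambda+\log(1-\lambda))$), so that the Dol\'{e}ans--Dade exponential $\mathcal{E}(\widetilde{S}(\lambda))$ remains strictly positive; hence $e^{\tilde{X}}/\mathcal{E}(\widetilde{S}(\lambda))$ is a nonnegative local martingale and therefore a supermartingale.

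Next I would pass to $U_t:=\exp\{\lambda M_t-\frac{\lambda^2}{2}\sum_{s\le t}(\Delta M_s)^2\}/\exp\{\widetilde{S}(\lambda)_t\}=e^{\tilde{X}_t}/e^{\widetilde{S}(\lambda)_t}$. Using $e^{y}\ge 1+y$ one has $e^{\widetilde{S}(\lambda)_t}\ge\mathcal{E}(\widetilde{S}(\lambda))_t$, so $U_t\le e^{\tilde{X}_t}/\mathcal{E}(\widetilde{S}(\lambda))_t$ and consequently $\mathbb{E}[U_T]\le\mathbb{E}[e^{\tilde{X}_T}/\mathcal{E}(\widetilde{S}(\lambda))_T]\le 1$ for every stopping time $T$; thus $U$ is a nonnegative supermartingale with $U_0=1$. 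This is precisely where the standing hypothesis of the theorem enters: in Theorem \ref{mr1} the inequality $S(\lambda)\le 0$ was \emph{derived} from $\Delta M\ge-1$ via the pointwise bound $\exp\{\lambda y+(\lambda+\log(1-\lambda))y^2\}\le 1+\lambda y$, whereas for the coefficient $\frac{\lambda^2}{2}$ no such universal pointwise inequality is available, and this is exactly why $\widetilde{S}(\lambda)_t\le 0$ must be imposed as an assumption rather than proved.

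With the supermartingale in hand the tail estimate is routine. Writing $B_1=\{M_t\ge x,\ \sum_{s\le t}|\Delta M_s|^2\le v^2\text{ for some }t>0\}$ and $\tau_1=\inf\{t>0:M_t\ge x,\ \sum_{s\le t}|\Delta M_s|^2\le v^2\}$, I note that on $B_1$ one has $M_{\tau_1}\ge x$ and $\sum_{s\le\tau_1}(\Delta M_s)^2\le v^2$, while $\widetilde{S}(\lambda)_{\tau_1}\le 0$; since $\lambda>0$ this gives $U_{\tau_1}\ge\exp\{\lambda x-\frac{\lambda^2}{2}v^2\}$. Applying optional stopping (equivalently, the maximal inequality for the nonnegative supermartingale $U$) then yields $\mathbb{P}(B_1)\le\exp\{-\lambda x+\frac{\lambda^2}{2}v^2\}$ for every $\lambda>0$.

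Finally I would optimize the free parameter. The exponent $-\lambda x+\frac{\lambda^2}{2}v^2$ is a convex quadratic in $\lambda$, minimized at $\lambda=x/v^2>0$, where its value is $-\frac{x^2}{2v^2}$; taking the infimum over $\lambda>0$ therefore gives $\mathbb{P}(B_1)\le\exp\{-\frac{x^2}{2v^2}\}$, the claimed bound. I expect no genuine computational obstacle: the only substantive point is the conceptual one already noted, namely checking that the entire local-martingale/supermartingale scaffolding of Theorem \ref{mr1} survives the replacement of $-(\lambda+\log(1-\lambda))$ by $\frac{\lambda^2}{2}$, with the hypothesis $\widetilde{S}(\lambda)_t\le 0$ taking over the role that $\Delta M\ge-1$ played before. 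After that, the optimization is the elementary Gaussian one instead of the logarithmic one appearing in Theorem \ref{mr1}, which is what produces the exponent $-\frac{x^2}{2v^2}$.
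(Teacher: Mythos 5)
Your proposal is correct and follows essentially the same route as the paper: invoke Proposition \ref{p1} with $f(\lambda)=\frac{\lambda^2}{2}$ so that $e^{\tilde X}/\mathcal{E}(\widetilde S(\lambda))$ is a local martingale, pass to $e^{\tilde X}/e^{\widetilde S(\lambda)}$ via $e^{y}\ge 1+y$, use the hypothesis $\widetilde S(\lambda)_t\le 0$ at the stopping time $\tau_1$, and optimize at $\lambda=x/v^2$. Your explicit remark on why $\widetilde S(\lambda)_t\le 0$ must be assumed rather than derived (no pointwise analogue of the bound used in Theorem \ref{mr1}) is a correct reading of the paper's intent, and the rest matches the published argument.
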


\begin{proof}

Define  $$V_t=\frac{\exp\{\lambda M_t-\frac{\lambda^2}{2}\sum_{s\le t}|\triangle M_s|^2\}}{ \mathcal{E}(\widetilde{S}(\lambda))_{t}}.$$

By Proposition \ref{p1},   $V$ is a local martingale. Note   $\widetilde{S}(\lambda)_{t}\le 0$  for any $t>0$ and  $\lambda>0$. We have 
             \begin{equation}
     \mathbb{E} \frac{\exp\{\lambda M_T-\frac{\lambda^2}{2}\sum_{s\le T}|\triangle M_s|^2\}}{e^{\tilde{S}(\lambda)_{T}}}\le \mathbb{E} V_T=1\end{equation}
   for any stopping time $T$.
   
   Recall that 
$$
B_1=\{M_t\ge x, \sum_{s\le t}|\triangle M_s|^2\le v^2 \mbox{~for some~} t>0\}
$$
and
$$
\tau_1=\inf\{t>0:  M_t\ge x, \sum_{s\le t}|\triangle M_s|^2\le v^2\}.
$$

We have 
 \begin{eqnarray}
\mathbb{P}(B_1) &\le&    \inf_{\lambda \ge 0 }\exp\{-\lambda x+\frac{\lambda^2}{2}v^2\}\nonumber\\
&=& \exp{\{-\frac{x^2}{2v^2}\}}.
\label{2.13}
\end{eqnarray}

\end{proof}

\begin{remark}

 For integrable random variable $\xi$ and a positive number $a>0$, define 
      $$T_a(\xi)=\min(|\xi|, a)sign (\xi).$$
If $\mathbb{E}[\xi]=0$, and for all $a>0$, $\mathbb{E}[T_a(\xi)]\le 0$. Then  $\xi$ is called heavy on left.  Bercu and Touati \cite{btt} extended Theorem \ref{d1} to general case.   Let  $S=(S_n)_{n\ge 0}$ be a locally square integrable on  $(\Omega, \mathcal{F}, (\mathcal{F}_n)_{n\ge 1}, \mathbb{P})$.  If  
     \begin{equation}
\label{hl}
\mathbb{E}[T_a(S_n-S_{n-1})|\mathcal{F}_{n-1}]\le 0\end{equation}
for all $a>0$ and $n>0$, Bercu and Touati \cite{btt} obtained 
            $$\mathbb{P}\big(S_n\ge x, \sum_{i=1}^{n}(S_i-S_{i-1})^2\le y\big)\le \exp\{-\frac{x^2}{2y}\}.$$

In fact, our condition,  $\widetilde{S}(\lambda)_{t}\le 0$,  is analogy of (\ref{hl}) in continuous time case.  Let $N=(N_t)_{t\ge 0}$ be a homogeneous Poisson point process with parameter $\kappa$, and let $(\eta_k)_{ k\ge 1}$ be a sequence of i.i.d. r.v.'s with a common distribution function $F(x)$. Assume $N$ is independent of $(\eta_k)_{ k\ge 1}$.  Define
\begin{equation}
 Y_t=\sum_{k=1}^{N_t}\eta_k, \quad t\ge 0.
 \end{equation}
This is a so-called compound Poisson process. The jump measure of $Y$ is given by
 \begin{equation}
 \mu^{Y}(dt,dx)=\sum_{k\ge 1}\mathbf{1}_{\{T_{k}<\infty\}}\varepsilon_{(T_{k}, \eta_k)}(dt,  dx),
 \end{equation}
and the predictable compensator  $\nu^Y$ is
\begin{equation}
 \nu^{Y}(dt,dx)=\kappa dtF(dx).
 \end{equation}
 Thus $(Y_t-x\ast\nu^Y_t)_{t\ge 0}$ is a purely discontinuous local martingale.  For $(Y_t-x\ast\nu^Y_t)_{t\ge 0}$, 
       \begin{eqnarray*}
 \widetilde{S}(\lambda)_{t}=\kappa\int_{0}^{t}\int_{\mathbb{R}}(e^{[\lambda x-\frac{\lambda^2}{2}x^2]}-1-\lambda x)F(dx)ds.
    \end{eqnarray*}    
  If  $\mathbb{E}[\eta_k]=0$ for any $\kappa\ge 1$,   $\widetilde{S}(\lambda)_{t}\le 0$ implies that 
                 \begin{equation}\label{hl2}
\int_{\mathbb{R}}e^{[\lambda x-\frac{\lambda^2}{2}x^2]}F(dx)\le 1.\end{equation}
   Bercu and Touati \cite{btt} obtained that if $\eta_k$  is heavy on left, then (\ref{hl2}) holds. Thus   our condition  is analogy of (\ref{hl}) in continuous time case.

 \end{remark}

In \cite{d} and \cite{dkl}, there were obtained a series of  exponential inequalities for events involving ratios in the context of  continuous martingales, which in turn extended the results in  \cite{k}.  Su and Wang \cite{sw} extended a similar problem for  purely discontinuous  local martingales in quasi-left continuous case. In this subsection, we obtained the similar inequality for stochastic integrals of multivariate point process.

  \begin{theorem} \label{mr4}Let  $\mu$ be  a multivariate point process,  $\nu$ be the predictable  compensator of $\mu$, $a_t=\nu(\{t\}\times \mathbb{R})$, $W$ be a given predictable function  on  $\tilde{\Omega}$, and $W\in  G_{loc}(\mu)$. Denote $M=W\ast (\mu-\nu)$. Then for all $x\ge 0, \beta>0, $ $v>0$ $\alpha\in \mathbb{R}$,
   \begin{eqnarray*}
&& \mathbb{P}\Big(M_t\ge (\alpha+\beta \sum_{s\le t}|\triangle M_s|^2)x, \,\, \sum_{s\le t}|\triangle M_s|^2\ge  <M,M>_t+v^2\mbox{~for some ~}t>0\Big )\\
&\le &   \exp\{-\frac{x^2}{2}(\alpha \beta +\frac{\beta^2v^2}{2})\}.
\end{eqnarray*}
\end{theorem}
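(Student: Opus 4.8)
The plan is to transport the discrete self-normalized inequality of Bercu and Touati (Theorem \ref{bt1}) to the present setting, reusing the exponential-supermartingale construction already behind Theorems \ref{mr1} and \ref{mr2}. The shape of the bound dictates the strategy: build an exponential supermartingale carrying a quadratic correction in $<M,M>_t$, stop it at the first time the two events occur, and choose the free parameter $\lambda$ of order $\beta x$ so that the unknown factor $\sum_{s\le t}|\Delta M_s|^2$ drops out and only the $\alpha\beta$ and $\beta^2v^2$ contributions remain.

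For fixed $\lambda\ge0$ I would start from the local martingale of Theorem \ref{mr2}, $V^{\lambda}_t=\exp\{\lambda M_t-\frac{\lambda^2}{2}\sum_{s\le t}|\Delta M_s|^2\}/\mathcal E(\widetilde S(\lambda))_t$, which Proposition \ref{p1} guarantees to be a local martingale. Since $(1+u)e^{-u}\le1$ gives $\mathcal E(\widetilde S(\lambda))_t\le\exp\{\widetilde S(\lambda)_t\}$, the process $V^{\lambda}$ dominates $\exp\{\lambda M_t-\Phi_\lambda(t)\}$ with $\Phi_\lambda(t)=\frac{\lambda^2}{2}\sum_{s\le t}|\Delta M_s|^2+\widetilde S(\lambda)_t$, and, being a nonnegative local martingale, it is a supermartingale; this yields the unconditional bound $\mathbb E\exp\{\lambda M_t-\Phi_\lambda(t)\}\le1$, the continuous-time analogue of the canonical hypothesis used in the discrete theorems.

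Next I would introduce
\[
\tau=\inf\Big\{t>0:\ M_t\ge\big(\alpha+\beta{\textstyle\sum_{s\le t}}|\Delta M_s|^2\big)x,\ {\textstyle\sum_{s\le t}}|\Delta M_s|^2\ge <M,M>_t+v^2\Big\},
\]
denote by $B$ the event in the statement, and pass from $\mathbb E[V^{\lambda}_t]\le1$ to $\mathbb E[V^{\lambda}_{\tau}\mathbf 1_{B}]\le1$ by localization and Fatou's lemma, treating the ``for some $t>0$'' formulation as in (\ref{3.7}) and (\ref{2.13}). The crux is then a pathwise estimate on $B$: using that the ratio constraint $M_{\tau}\ge(\alpha+\beta\sum_{s\le\tau}|\Delta M_s|^2)x$ penalizes jumps, I would bound the purely discontinuous correction by $\Phi_\lambda(\tau)\le\lambda^{2}<M,M>_{\tau}$. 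Granting this, on $B$ one has
\[
\lambda M_{\tau}-\Phi_\lambda(\tau)\ge\lambda\big(\alpha+\beta{\textstyle\sum_{s\le\tau}}|\Delta M_s|^2\big)x-\lambda^{2}<M,M>_{\tau}\ge\lambda\alpha x+\lambda^{2}v^{2}+(\lambda\beta x-\lambda^{2}){\textstyle\sum_{s\le\tau}}|\Delta M_s|^2,
\]
where the second step uses $<M,M>_{\tau}\le\sum_{s\le\tau}|\Delta M_s|^2-v^2$. Taking $\lambda=\beta x/2$ makes the coefficient $\lambda\beta x-\lambda^{2}=\beta^{2}x^{2}/4$ nonnegative, so that term may be dropped, leaving $\lambda M_{\tau}-\Phi_\lambda(\tau)\ge\frac{x^{2}}{2}(\alpha\beta+\frac{\beta^{2}v^{2}}{2})$; hence $\exp\{\frac{x^{2}}{2}(\alpha\beta+\frac{\beta^{2}v^{2}}{2})\}\,\mathbb P(B)\le\mathbb E[V^{\lambda}_{\tau}\mathbf 1_{B}]\le1$, which is the assertion.

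The main obstacle is precisely the pathwise estimate $\Phi_\lambda(\tau)\le\lambda^{2}<M,M>_{\tau}$, which must hold with no hypothesis on the jumps $\Delta M$ — in contrast to the assumption $\Delta M\ge-1$ of Theorem \ref{mr1} and the assumption $\widetilde S(\lambda)\le0$ of Theorem \ref{mr2}. What has to be exploited is that the normalization is itself jump-penalizing: a jump of size $y$ raises $M$ by $y$ but raises $\sum|\Delta M|^2$ by $y^{2}$, so membership in $B$ forces the effective jumps to be small and brings the Laplace-type correction $\widetilde S(\lambda)$ down to the Gaussian order $\frac{\lambda^{2}}{2}<M,M>$; it is this comparison, rather than a pointwise domination valid for all jumps, that both drives the argument and explains the loss of a constant relative to the sharper discrete bound of Theorem \ref{bt1}. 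Alongside it, I would track the predictable atoms carefully — the terms involving $a_t$ and $\hat W$, through which the failure of quasi-left-continuity enters the definitions of $\widetilde S(\lambda)$ and $<M,M>$ — and give the rigorous optional-stopping justification at the first-passage time $\tau$.
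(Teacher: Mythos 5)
Your overall architecture (the local martingale $V^{\lambda}$ from Proposition \ref{p1}, the domination $\mathcal E(\widetilde S(\lambda))\le e^{\widetilde S(\lambda)}$, optional stopping at $\tau$, and a choice $\lambda\sim\beta x$) is the right machinery, but the step you yourself identify as the crux --- the pathwise estimate $\Phi_\lambda(\tau)\le\lambda^{2}\langle M,M\rangle_{\tau}$ on $B$ --- is false as stated, and your heuristic for it does not work. Since $\Phi_\lambda(\tau)=\frac{\lambda^{2}}{2}\sum_{s\le\tau}|\Delta M_s|^{2}+\widetilde S(\lambda)_{\tau}$ and on $B$ one has $\sum_{s\le\tau}|\Delta M_s|^{2}\ge \langle M,M\rangle_{\tau}+v^{2}$ with \emph{no upper bound} on $\sum_{s\le\tau}|\Delta M_s|^{2}$ (the event constrains it from below, not above), the first summand alone already exceeds $\frac{\lambda^{2}}{2}\langle M,M\rangle_{\tau}$ and can exceed $\lambda^{2}\langle M,M\rangle_{\tau}$ by an arbitrary amount; the constraint $M_\tau\ge(\alpha+\beta\sum|\Delta M_s|^2)x$ does not ``force the effective jumps to be small'' in any quantitative sense that rescues the inequality. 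So the proof as written has a genuine hole, and the claimed estimate appears to have been reverse-engineered from the target bound rather than derived.

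The estimate that \emph{is} available unconditionally --- and is exactly what the paper uses --- is $\widetilde S(\lambda)_t\le\frac{\lambda^{2}}{2}\langle M,M\rangle_t$, coming from $|e^{y-y^{2}/2}-1-y|\le\frac{1}{2}y^{2}$ applied to the integrands of $\widetilde S(\lambda)$ against $\nu$. If you substitute this (i.e.\ $\Phi_\lambda(\tau)\le\frac{\lambda^{2}}{2}\sum_{s\le\tau}|\Delta M_s|^{2}+\frac{\lambda^{2}}{2}\langle M,M\rangle_{\tau}$) and keep the quadratic-variation term explicit, your own chain of inequalities survives: on $B$ one gets $\lambda M_\tau-\Phi_\lambda(\tau)\ge\lambda\alpha x+\frac{\lambda^{2}}{2}v^{2}+(\lambda\beta x-\lambda^{2})\sum_{s\le\tau}|\Delta M_s|^{2}$, and the correct choice is $\lambda=\beta x$ (not $\beta x/2$), which kills the coefficient of the unknown sum and yields $\mathbb P(B)\le\exp\{-x^{2}(\alpha\beta+\frac{\beta^{2}v^{2}}{2})\}$, even slightly stronger than the stated bound. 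This repaired argument is genuinely different from the paper's proof, which never attempts a pathwise bound on the event: the paper applies Markov's inequality with exponent $\lambda/4$, uses Cauchy--Schwarz twice to split off the factor $\mathbb E\exp\{\widetilde S(\lambda)_{\tau_2}-\frac{\lambda^{2}}{2}\langle M,M\rangle_{\tau_2}\}\le1$ together with a term that on $B_2$ contributes $e^{-\lambda^{2}v^{2}/8}\sqrt{\mathbb P(B_2)}$, and then solves the self-bounding inequality $\mathbb P(B_2)\le C\sqrt{\mathbb P(B_2)}$. Until you replace the false pathwise estimate by the elementary bound on $\widetilde S(\lambda)$ (or by the paper's Cauchy--Schwarz route), the proposal is incomplete.
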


\begin{proof}
Recall that  $V=(V_t)_{t\ge 0}$ is a local martingale, where 
                 $$V_t=\frac{\exp\{\lambda M_t-\frac{\lambda^2}{2}\sum_{s\le t}|\triangle M_s|^2\}}{ \mathcal{E}(\widetilde{S}(\lambda))_{t}}. $$ 
         For any stopping time $T$, 
               
               \begin{equation}
\label{em1}
 \mathbb{E} \frac{\exp\{\lambda M_T-\frac{\lambda^2}{2}\sum_{s\le T}|\triangle M_s|^2\}}{\exp\{\tilde{S}(\lambda)_{T}\}}\le \mathbb{E}[V_T]=1.\end{equation}

               By Markov's inequality, we obtain that for all $\lambda>0$,
\begin{eqnarray*}
         & & \mathbb{P}\Big(M_t\ge (\alpha+\beta \sum_{s\le t}|\triangle M_s|^2)x \mbox{~and~}  \sum_{s\le t}|\triangle M_s|^2\ge <M,M>_t+v^2\mbox{~for some ~}t>0\Big) \\
         & \le &    \mathbb{E}\exp\{\frac{\lambda}{4} M_{\tau_2}- (\frac{\alpha \lambda x}{4}+\frac{\beta \lambda x}{4} \sum_{s\le \tau_2}|\triangle M_s|^2)\} 1_{B_2}\\
         &=&   \exp\{-\frac{\alpha \lambda x}{4}\}  \mathbb{E}\exp\{\frac{\lambda}{4} M_{\tau_2}-\frac{\lambda^2}{8} ( \sum_{s\le \tau_2}|\triangle M_s|^2+<M,M>_{\tau_2})\\
         & &+(\frac{\lambda^2}{8} -\frac{\beta \lambda x}{4})\sum_{s\le \tau_2}|\triangle M_s|^2+\frac{\lambda^2}{8}<M,M>_{\tau_2})\} 1_{B_2}\\
         &\le &  \exp\{-\frac{\alpha \lambda x}{4}\} \sqrt{ \mathbb{E}\exp\{\frac{\lambda}{2} M_{\tau_2}-\frac{\lambda^2}{4}  (\sum_{s\le\tau_2}|\triangle M_s|^2+<M,M>_{\tau_2})\}1_{B_2}}  \\
         & &\times \sqrt{  \mathbb{E}\exp\{(\frac{\lambda^2}{4} -\frac{\beta \lambda x}{2})\sum_{s\le \tau_2}|\triangle M_s|^2+\frac{\lambda^2}{4}<M,M>_{\tau_2}\}1_{B_2}},\end{eqnarray*}
 where 
      $$B_2=\{M_t\ge (\alpha+\beta \sum_{s\le t}|\triangle M_s|^2)x, \sum_{s\le t}|\triangle M_s|^2\ge <M,M> _t+v^2 \mbox{~for~some~} t>0 \},$$
      
      $$
\tau_2=\inf\{t>0:  M_t\ge (\alpha+\beta \sum_{s\le t}|\triangle M_s|^2)x, \sum_{s\le t}|\triangle M_s|^2\ge <M,M>_t +v^2\}.
$$      
In fact,     \begin{eqnarray*}
     & & \mathbb{E}\exp\{\frac{\lambda}{2} M_{\tau_2}-\frac{\lambda^2}{4}  (\sum_{s\le\tau_2}|\triangle M_s|^2+<M,M>_{\tau_2})1_{B_2}\} \\
   &\le &  \sqrt{ \mathbb{E}\frac{\exp\{\lambda M_{\tau_2}-\frac{\lambda^2}{2}  \sum_{s\le \tau_2}|\triangle M_s|^2\}}{\exp\{\widetilde{S}(\lambda)_{\tau_2}\}}1_{B_2}} \sqrt{ \mathbb{E}\exp\{\widetilde{S}(\lambda)_{\tau_2}-\frac{\lambda^2}{2} <M,M>_{\tau_2}\}}.
   \end{eqnarray*}
      
    By (\ref{em1})
        $$ \mathbb{E}\frac{\exp\{\lambda M_{\tau_2}-\frac{\lambda^2}{2}  \sum_{s\le \tau_2}|\triangle M_s|^2\}}{\exp\{\widetilde{S}(\lambda)_{\tau_2}\}}1_{B_2}\le 1.$$  
      
    Furthermore, 
              $$ \mathbb{E}\exp\{\widetilde{S}(\lambda)_{\tau_2}-\frac{\lambda^2}{2} <M,M>_{\tau_2}\}\le 1$$
 by $$\big|\exp\{x-\frac{1}{2}x^2\}-1-x\big|\le \frac{1}{2}x^2, \quad  x\in \mathbb{R}. $$
      
Taking $\lambda=\beta x$,  we get
\begin{eqnarray*}
         \mathbb{P}\Big(B_2\Big) \le    \exp\{-\frac{x^2}{4}(\alpha \beta +\frac{\beta^2v^2}{2})\}\times \sqrt{ \mathbb{P}\Big(B_2\Big)}.
   \end{eqnarray*}   
   Thus
          \begin{eqnarray*}
&& \mathbb{P}\Big(M_t\ge (\alpha+\beta \sum_{s\le t}|\triangle M_s|^2)x, \,\, \sum_{s\le t}|\triangle M_s|^2\ge  <M,M>_t+v^2\mbox{~for some ~}t>0\Big )\\
&\le &   \exp\{-\frac{x^2}{2}(\alpha \beta +\frac{\beta^2v^2}{2})\}.
\end{eqnarray*}

\end{proof}
   
 From the proof of Theorem \ref{mr4}, we can obtain the following results.

  \begin{theorem} \label{mr5}Let  $\mu$ be  a multivariate point process, $\nu$ be the predictable  compensator of $\mu$, $a_t=\nu(\{t\}\times \mathbb{R})$, $W$ be a given predictable function  on  $\tilde{\Omega}$, and $W\in  G_{loc}(\mu)$. Denote $M=W\ast (\mu-\nu)$.  In addition, define
 \begin{eqnarray*}
 \widetilde{S}(\lambda)_{t}&=:&\int_{0}^{t}\int_{\mathbb{R}}(e^{[\lambda (W-\hat{W})-\frac{\lambda^2}{2}(W-\hat{W})^2]}-1-\lambda (W-\hat{W}))\nu(ds,dx)\\
 & &+\sum_{s\le t}(1-a_{s})(e^{[-\lambda \hat{W}_{s}+\frac{\lambda^2}{2}(\hat{W}_s)^2]}-1+\lambda \hat{W}_{s}),  
   \end{eqnarray*}    
 and assume  that for any $t>0$ and  $\lambda>0$, $\widetilde{S}(\lambda)_{t}\le 0$.  Then for all $x\ge 0, \beta>0, v>0$, $\alpha\in \mathbb{R}$,
   \begin{eqnarray*}
&& \mathbb{P}\Big(M_t\ge (\alpha+\beta \sum_{s\le t}|\triangle M_s|^2)x, \,\, \sum_{s\le t}|\triangle M_s|^2\ge v^2\mbox{~for some ~}t>0\Big )\\
&\le &   \exp\{-\frac{x^2}{4}(\alpha \beta +\frac{\beta^2v^2}{2})\}.
\end{eqnarray*}
\end{theorem}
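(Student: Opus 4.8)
The plan is to run the argument of Theorem \ref{mr4} almost verbatim, the only---but decisive---change being that the extra hypothesis $\widetilde{S}(\lambda)_t\le 0$ lets me dispense with the predictable quadratic variation $<M,M>$ altogether. I would start from the local martingale $V_t=\exp\{\lambda M_t-\frac{\lambda^2}{2}\sum_{s\le t}|\triangle M_s|^2\}/\mathcal{E}(\widetilde{S}(\lambda))_t$ furnished by Proposition \ref{p1}. As in the proof of Theorem \ref{mr2}, the inequality $e^{x}\ge 1+x$ gives $e^{\widetilde{S}(\lambda)_t}\ge\mathcal{E}(\widetilde{S}(\lambda))_t$, so optional stopping at an arbitrary stopping time $T$ yields the estimate (\ref{em1}). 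The new input is that $\widetilde{S}(\lambda)_t\le 0$ forces $e^{\widetilde{S}(\lambda)_T}\le 1$, whence (\ref{em1}) sharpens to the clean bound $\mathbb{E}\exp\{\lambda M_T-\frac{\lambda^2}{2}\sum_{s\le T}|\triangle M_s|^2\}\le 1$, valid for \emph{every} $\lambda>0$ and every stopping time $T$, with no $<M,M>$ correction.

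With $B_2$ and $\tau_2$ now governed by the weaker constraint $\sum_{s\le t}|\triangle M_s|^2\ge v^2$, I would invoke the exponential Markov inequality exactly as before: on $B_2$ one has $M_{\tau_2}\ge(\alpha+\beta Q)x$ with $Q:=\sum_{s\le\tau_2}|\triangle M_s|^2$, so $\mathbb{P}(B_2)\le e^{-\alpha\lambda x/4}\,\mathbb{E}[\exp\{\frac{\lambda}{4}M_{\tau_2}-\frac{\beta\lambda x}{4}Q\}\mathbf{1}_{B_2}]$. I then split the exponent by Cauchy--Schwarz through the identity $\frac{\lambda}{4}M_{\tau_2}-\frac{\beta\lambda x}{4}Q=\frac12\big(\frac{\lambda}{2}M_{\tau_2}-\frac{\lambda^2}{4}Q\big)+\frac12\big(\frac{\lambda^2}{4}-\frac{\beta\lambda x}{2}\big)Q$. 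The first bracket is a ``martingale factor'': since $\frac{\lambda^2}{4}Q\ge\frac{\lambda^2}{8}Q$, it is dominated by $\frac{\lambda}{2}M_{\tau_2}-\frac{\lambda^2}{8}Q$, which is of the form $\mu M_{\tau_2}-\frac{\mu^2}{2}Q$ with $\mu=\lambda/2$, so the clean bound forces its expectation to be $\le 1$. The second bracket is an ``event factor'' depending on $Q$ alone.

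Taking $\lambda=\beta x$ makes the coefficient of $Q$ in the event factor equal to $-\frac{\beta^2x^2}{4}<0$, so the constraint $Q\ge v^2$ bounds that factor by $\exp\{-\frac{\beta^2x^2v^2}{8}\}\sqrt{\mathbb{P}(B_2)}$; together with the pulled-out $e^{-\alpha\beta x^2/4}$ this produces the self-referential inequality $\mathbb{P}(B_2)\le\exp\{-\frac{x^2}{4}(\alpha\beta+\frac{\beta^2v^2}{2})\}\sqrt{\mathbb{P}(B_2)}$, which after dividing by $\sqrt{\mathbb{P}(B_2)}$ gives $\sqrt{\mathbb{P}(B_2)}\le\exp\{-\frac{x^2}{4}(\alpha\beta+\frac{\beta^2v^2}{2})\}$ and hence, using $\mathbb{P}(B_2)\le 1$, the asserted estimate. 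The one place that genuinely differs from Theorem \ref{mr4}, and where I expect the only real difficulty, is the choice of this splitting: in Theorem \ref{mr4} the natural decomposition leaves a surplus $+\frac{\lambda^2}{4}<M,M>$ in the event factor, absorbed there by the stronger event $\sum_{s\le t}|\triangle M_s|^2\ge<M,M>_t+v^2$. Here the event controls $Q$ alone, so I must instead arrange the Cauchy--Schwarz split so that no $<M,M>$ term survives---which is possible precisely because the improved estimate of the first paragraph discards the $<M,M>$-corrected denominator of (\ref{em1}).
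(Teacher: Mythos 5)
Your proof is correct and is essentially the paper's intended argument: the authors give no separate proof of Theorem \ref{mr5}, stating only that it follows from the proof of Theorem \ref{mr4}, and your adaptation---using the hypothesis $\widetilde{S}(\lambda)_t\le 0$ to obtain the clean bound $\mathbb{E}\exp\{\lambda M_T-\frac{\lambda^2}{2}\sum_{s\le T}|\triangle M_s|^2\}\le 1$ and then rearranging the Cauchy--Schwarz split so that no $<M,M>$ term survives in the event factor---is exactly the required modification. As a minor aside, squaring the self-referential inequality instead of bounding $\sqrt{\mathbb{P}(B_2)}\le 1$ would even yield the sharper constant $\exp\{-\frac{x^2}{2}(\alpha\beta+\frac{\beta^2v^2}{2})\}$, but the stated estimate follows as you argue.
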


   \section{ Application}\label{secE}
\setcounter{equation}{0}
\renewcommand{\theequation}{\thesection.\arabic{equation}}

In this section, we will derive  exponential  inequalities for block counting process in $\Lambda-$coalescents.  The $\Lambda-$coalescents  was introduced independently by Pitman \cite{p} and Sagitov \cite{s}. In this paper, the notation and details of $\Lambda-$coalescents  are from Limic and Talarczyk \cite{lt}.

Let $\Lambda$ be an probability measure on $[0,1]$, $\Pi=(\Pi_t)_{t\ge 0}$ is a Markov jump process.  $\Pi$ takes values in the set of partition of $\{1,2,\cdots\}$.  For any $n\ge 1$, the restriction $\Pi^n$ of $\Pi$ to $\{1,2,\cdots, n\}$ is a continuous time Markov chain with the following transitions: when $\Pi^n$ has $b$ blocks, any given $k-$tuples of blocks coalesces at rate 
                                       $$\lambda_{b,k}=\int_{0}^{1}r^{k-2}(1-r)^{b-k}\Lambda(dr)$$
where $2\le b\le n$. Let $N_t$ be the number of blocks of $\Pi_t$ at $t$.  In fact, $N=(N_t)_{t\ge 0}$ is a point process.   Limic and Talarczyk \cite{lt} presented  integral equation for $N$.  Define 
                 $$\pi(dt, dy, d{\bf x})=\sum_{k\ge 1}\varepsilon_{\{T_k, Y_k, {\bf X^k}\}}(dt, dy, d{\bf x})$$
where $\{{\bf X^k}\}$ is an independent array of i.i.d. random variables $(X_j^k)_{j,k\in \mathbb{N}}$ , where $X_j^k$ have uniform distribution on $[0,1]$.  The multivariate point processes $\pi$ have the compensator $dt\frac{\Lambda(dy)}{y^2}d{\bf x}$.

Limic and Talarczyk \cite{lt} obtained that 
            $$N_t=N_r-\int_{r}^{t}\int_{0}^{1}\int_{[0,1]^{\mathbb{N}}}f(N_{s-},y,{\bf x})\pi(ds, dy, d{\bf x})$$
for all $0<r<t$, where 
      $$f(k,y, {\bf x})=\sum_{j=1}^{k}1_{\{x_i\le y\}}-1+1_{\cap_{j=1}^{k}\{x_j>y\}}.$$
Define 
         $$\Psi(k)=\int_{0}^{1}\int_{[0,1]^{\mathbb{N}}}f(k,y,{\bf x})]\frac{\Lambda(dy)}{y^2}d{\bf x},$$  $$t=\int_{v_t}^{\infty}\frac{1}{\Psi(q)}dq,$$
and 
         $$M_t=\int_{0}^{t}\int_{0}^{1}\int_{[0,1]^{\mathbb{N}}}\frac{f(N_{s-},y,{\bf x})}{v_s}(\pi(dt, dy, d{\bf x})-ds\frac{\Lambda(dy)}{y^2}d{\bf x}).$$

$M=(M_t)_{t\ge 0}$ plays important role in the study of  $\Lambda-$coalescents. Limic and Talarczyk \cite{lt} obtained that $M$ is a square integrable martingale.  It is not difficult to see that $\triangle M\ge 0$, 
          $$\sum_{s\le t}|\triangle M_s|^2=\int_{0}^{t}\int_{0}^{1}\int_{[0,1]^{\mathbb{N}}}\frac{f^2(N_{s-},y,{\bf x})}{v^2_s}\pi(dt, dy, d{\bf x})$$
and 
          $$<M,M>_t=\int_{0}^{t}\int_{0}^{1}\int_{[0,1]^{\mathbb{N}}}\frac{f^2(N_{s-},y,{\bf x})}{v^2_s}ds\frac{\Lambda(dy)}{y^2}d{\bf x}.$$
We have the following result. 

  \begin{theorem} \label{mr6}Let  $M$ be  defined as above, we have 
    \begin{eqnarray*} \mathbb{P}\Big(M_t\ge x, \, \sum_{s\le t}|\triangle M_s|^2\le v^2\mbox{~for some ~}~t>0\Big)\le    \Big( \frac{v^2+x}{v^2}\Big)^{v^2}e^{-x} 
 \end{eqnarray*}
 and 
  \begin{eqnarray*}
&& \mathbb{P}\Big(M_t\ge (\alpha+\beta \sum_{s\le t}|\triangle M_s|^2)x, \,\, \sum_{s\le t}|\triangle M_s|^2\ge  <M,M>_t+v^2\mbox{~for some ~}t>0\Big )\\
&\le &   \exp\{-\frac{x^2}{2}(\alpha \beta +\frac{\beta^2v^2}{2})\}.
\end{eqnarray*}
where  $x\ge 0, \beta>0, v>0$, $\alpha\in \mathbb{R}$.

   \end{theorem}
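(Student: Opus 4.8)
The plan is to recognize both inequalities as direct specializations of Theorem \ref{mr1} and Theorem \ref{mr4} to the particular martingale $M$ built from the $\Lambda$-coalescent, so that the real work reduces to casting $M$ in the framework of Section \ref{secR} and checking the hypotheses of each theorem; once this is done, no further estimation is needed.

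First I would exhibit $M$ as a stochastic integral of a multivariate point process. The random measure $\pi(dt,dy,d\mathbf{x})=\sum_{k\ge 1}\varepsilon_{\{T_k,Y_k,\mathbf{X}^k\}}$ is exactly an integer-valued random measure induced by the point process $(T_k,Y_k,\mathbf{X}^k)_{k\ge 1}$, with predictable compensator $\nu(dt,dy,d\mathbf{x})=dt\,\frac{\Lambda(dy)}{y^2}\,d\mathbf{x}$. Taking the predictable integrand $W(s,y,\mathbf{x})=f(N_{s-},y,\mathbf{x})/v_s$, the defining formula for $M$ displays it precisely as $M=W\ast(\pi-\nu)$, i.e. $\mu=\pi$ in the notation of Section \ref{secR}. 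One technical point is that the mark space here is $[0,1]\times[0,1]^{\mathbb{N}}$ rather than $\mathbb{R}$; since this is a Lusin space, the construction of the integral and all computations of Section \ref{secR} carry over verbatim (equivalently, one may embed the mark space measurably into $\mathbb{R}$), so the general results apply without change.

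Next I would verify the hypotheses. Because Limic and Talarczyk \cite{lt} show that $M$ is a square integrable martingale with $M_0=0$, and $M$ is purely discontinuous, we have $\mathbb{E}\big[\sum_{s\le t}|\triangle M_s|^2\big]=\mathbb{E}[M_t^2]<\infty$; as $\big[\sum_{s\le t}(\widetilde{W}_s)^2\big]^{1/2}=\big(\sum_{s\le t}|\triangle M_s|^2\big)^{1/2}$ is increasing and dominated by $1+\sum_{s\le t}|\triangle M_s|^2$, it is a locally integrable variation process, so $W\in G_{loc}(\mu)$. This is the only hypothesis of Theorem \ref{mr4}, so the second inequality follows at once by applying that theorem to this $M$. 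For the first inequality I additionally need $\triangle M\ge -1$. Since the compensator part of $M$ is absolutely continuous in $t$, the jumps of $M$ occur only at the times $T_k$ and satisfy $\triangle M_{T_k}=f(N_{T_k-},Y_k,\mathbf{X}^k)/v_{T_k}$; a direct inspection of $f(k,y,\mathbf{x})=\sum_{j=1}^{k}1_{\{x_j\le y\}}-1+1_{\cap_{j=1}^{k}\{x_j>y\}}$ shows that $f\ge 0$ (it vanishes unless at least two coordinates $x_j$ lie below $y$, in which case it is positive), and as $v_s>0$ this gives $\triangle M\ge 0\ge -1$. Theorem \ref{mr1} then yields the first inequality.

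The two displayed bounds are thus obtained by direct appeal to Theorem \ref{mr1} and Theorem \ref{mr4}. The only genuine obstacle is the bookkeeping needed to place $M$ exactly in the setting of Section \ref{secR}: confirming that the multidimensional mark space causes no difficulty, that $W\in G_{loc}(\mu)$ via square integrability, and that the sign condition $\triangle M\ge 0$ holds. Once these are checked, both estimates are immediate, with no additional analysis required.
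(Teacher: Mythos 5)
Your proposal is correct and follows exactly the route the paper intends: the paper gives no explicit proof of this theorem, but its preceding paragraphs set up precisely the facts you verify (that $M=W\ast(\pi-\nu)$ with $W=f(N_{s-},y,\mathbf{x})/v_s$, that $\triangle M\ge 0$ since $f\ge 0$ and the compensator is continuous in $t$, and the expressions for $\sum_{s\le t}|\triangle M_s|^2$ and $<M,M>_t$), after which the two bounds are immediate from Theorem \ref{mr1} and Theorem \ref{mr4}. Your additional checks (the multidimensional mark space, membership $W\in G_{loc}(\mu)$ via square integrability) fill in details the paper leaves implicit but do not change the argument.
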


\section *{Acknowledgments}

This work  was supported by  National Key R$\&$D Program of China (No.2018YFA0703900) and Shandong Provincial Natural Science Foundation (No. ZR2019ZD41).


\end{document}